\newtheorem{thm}{Theorem}[section]
\newtheorem{prop}[thm]{Proposition}
\theoremstyle{definition} 
\newtheorem{rem}[thm]{Remark}
\newcommand{\CC}{\mathcal C} 
\newcommand{\MM}{\mathcal M}
\newcommand{\NN}{\mathcal N} 
\DeclareMathOperator{\obj}{Obj} 
\newcommand{\dn}{\downarrow}
\begin{document}
\title{On reflective-coreflective equivalence and associated pairs}
\author{Erik B\'edos}
\address{Institute of Mathematics, University of Oslo, P.B. 1053 
Blindern, 0316 Oslo, Norway}
\email{bedos@math.uio.no}
\author{S. Kaliszewski}
\address{School of Mathematical and Statistical Sciences, Arizona 
State University, Tempe, AZ 85287}
\email{kaliszewski@asu.edu}
\author{John Quigg}
\address{School of Mathematical and Statistical Sciences, Arizona 
State University, Tempe, AZ 85287}
\email{quigg@asu.edu}
\date{December 20, 2011} 

\subjclass[2000]{Primary 18A40; Secondary 46L55, 46L89}

\keywords{Reflective and coreflective subcategories, equivalent
categories, associated pairs of subcategories}

\hyphenation{sub-categories}

\begin{abstract} We show that a reflective/coreflective pair of full
subcategories satisfies a ``maximal-normal''-type equivalence if and
only if it is an associated pair in the sense of Kelly and Lawvere.
\end{abstract}

\maketitle

\section{Introduction}

In a recent paper \cite{BKQ} we explored a special type of category
equivalence between reflective/coreflective pairs of subcategories that
we first encountered in the context of crossed-product duality for
$C^*$-algebras.  Because our main example of this phenomenon involved
categories of maximal and normal $C^*$-coactions of locally compact
groups, we called it a ``maximal-normal''-type equivalence.

Since then, F.~W.~Lawvere has drawn our attention to~\cite{KL}, where
G.~M.~Kelly and he introduced the concept of \emph{associated pairs} of
subcategories. The purpose of this short note is to show that these two
notions of equivalence are the same: a reflective/coreflective pair of
full subcategories satisfies the ``maximal-normal''-type equivalence
considered in \cite{BKQ}  if and only if it is an associated pair in the
sense of~\cite{KL}.

As operator algebraists, we had hoped with~\cite{BKQ} to initiate a
cross-fertilization between operator algebras and category theory, and
we are grateful to Ross Street for the role he has played in helping
this happen.  Our understanding of the operator-algebraic examples has
certainly been deepened by this connection; ideally, the techniques and
examples  of ``maximal-normal''-type equivalence will in turn provide a
way of looking at associated pairs that will also be useful to category
theorists.

\section{Maximal-normal equivalences and associated pairs}

Our conventions regarding category theory follow \cite{maclane}; see
also  \cite{BKQ}. Throughout this note, we let $\MM$ and $\NN$  denote
full subcategories of a category $\CC$, with $\NN$ reflective and $\MM$
coreflective. The inclusion functors  $I : \MM \to \CC$ and
$J:\NN\to\CC$ are then both full and faithful. 
We also use the
following notation:

\begin{itemize} 
\item $N\colon\CC\to\NN$ is a reflector and
$\theta\colon1_\CC\to J   N$ denotes the unit of the adjunction $N\dashv
J$; 

\item $M\colon\CC\to\MM$ is a coreflector and   $\psi\colon I  M\to
1_\CC$ denotes the counit of the adjunction $I \dashv M$. 
\end{itemize} In
\cite[Corollary 4.4]{BKQ} we showed that the adjunction $N  I \dashv M 
J $ is an adjoint equivalence between $\MM$ and $\NN$ if and only if
\begin{enumerate} 
\item[(I)] 
for each $y\in\obj\NN$, $(y,\psi_y)$ is an
{initial} object in the comma category $My\dn\NN$; and 
\item[(F)] 
for
each $x\in\obj\MM$, $(x,\theta_x)$ is a {final} object in the comma
category $\MM\dn Nx$. 
\end{enumerate}

In all our examples in \cite{BKQ}, the adjoint equivalence $N  I \dashv
M  J $ between $\MM$ and $\NN$ was what we called the ``maximal-normal"
type (recall that this terminology was motivated by the particular example of maximal and normal coactions on $C^*$-algebras; see \cite[Corollary~6.16]{BKQ}):  in addition to (I) and (F), such an adjunction satisfies
\begin{enumerate} 
\item[(A)] 
for each $z\in\obj\CC$, $(Nz, \theta_z
\circ \psi_z)$ is an initial object in  $Mz\dn \NN$. \end{enumerate}
Equivalently, by \cite[Theorem 3.4]{BKQ}, (I) and~(F) hold, and
\begin{enumerate} 
\item[(B)] 
for each $z\in\obj\CC$, $(Mz,
\theta_z\circ\psi_z)$ is a final object in $\MM\dn Nz$. 
\end{enumerate}
In fact, conditions~(A) and (B) alone suffice:

\begin{prop}\label{max-nor} 
The adjunction $N  I \dashv M  J $ between
$\MM$ and $\NN$ is a ``maximal-normal" adjoint equivalence if and only
if \textup(A\textup) and \textup(B\textup) hold. 
\end{prop}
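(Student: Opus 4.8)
The plan is to reduce the statement to the two implications ``(A) $\Rightarrow$ (I)'' and ``(B) $\Rightarrow$ (F)'', after which everything is formal. For the forward direction, if $NI\dashv MJ$ is a ``maximal-normal'' adjoint equivalence then by definition (I), (F) and (A) hold, and by \cite[Theorem 3.4]{BKQ} so does (B); in particular (A) and (B) hold. For the converse, I would assume (A) and (B) and prove that (I) and (F) follow; then by \cite[Corollary 4.4]{BKQ} the adjunction $NI\dashv MJ$ is an adjoint equivalence, and the assumed condition (A) makes it a ``maximal-normal'' one.

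Before addressing ``(A) $\Rightarrow$ (I)'', I would recall the standard fact that, because the inclusion $J$ is full and faithful, the unit component $\theta_{y}$ is an isomorphism for every $y\in\obj\NN$ (it is forced to be invertible by a triangle identity, the counit of $N\dashv J$ being a natural isomorphism in this situation), and dually the counit component $\psi_{x}$ is an isomorphism for every $x\in\obj\MM$ since $I$ is full and faithful. Granting this, fix $y\in\obj\NN$ and apply (A) to $z=y$, regarded as an object of $\CC$: the pair $(Ny,\theta_y\circ\psi_y)$ is initial in $My\dn\NN$. Now $\theta_y^{-1}\colon Ny\to y$ satisfies $\theta_y^{-1}\circ(\theta_y\circ\psi_y)=\psi_y$, so it is a morphism $(Ny,\theta_y\circ\psi_y)\to(y,\psi_y)$ in the comma category $My\dn\NN$, invertible there because $\theta_y^{-1}$ is invertible in $\NN$; hence $(y,\psi_y)$ is isomorphic to an initial object and is therefore itself initial, which is exactly (I). The implication ``(B) $\Rightarrow$ (F)'' is dual: fixing $x\in\obj\MM$ and applying (B) to $z=x$, the pair $(Mx,\theta_x\circ\psi_x)$ is final in $\MM\dn Nx$, and $\psi_x\colon Mx\to x$, being an isomorphism in $\MM$ with $\theta_x\circ\psi_x=\theta_x\circ\psi_x$, is an isomorphism $(Mx,\theta_x\circ\psi_x)\to(x,\theta_x)$ in $\MM\dn Nx$, so $(x,\theta_x)$ is final, which is (F).

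I expect the only points requiring care to be the bookkeeping in the comma categories — checking that the indicated maps really are morphisms there, and that being isomorphic to an initial (resp.\ final) object transfers the universal property — together with the (routine) observation that $\theta$ and $\psi$ restrict to natural isomorphisms on $\NN$ and $\MM$, which rests squarely on the fullness and faithfulness of $J$ and $I$ already noted in the text. No idea beyond specializing (A) and (B) to objects of $\NN$ and $\MM$ seems to be needed.
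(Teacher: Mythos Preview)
Your argument is correct. Both the forward direction and the two implications ``(A) $\Rightarrow$ (I)'' and ``(B) $\Rightarrow$ (F)'' are sound; the comma-category bookkeeping checks out, and the use of the fact that $\theta_y$ (for $y\in\obj\NN$) and $\psi_x$ (for $x\in\obj\MM$) are isomorphisms is exactly what makes the transfer of initiality/finality work.

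The paper's proof takes a different route. Rather than arguing inside the comma categories, it invokes the equivalences (I) $\Leftrightarrow$ (I$'$), (F) $\Leftrightarrow$ (F$'$) from \cite[Theorem~4.3]{BKQ} and (A) $\Leftrightarrow$ (A$'$), (B) $\Leftrightarrow$ (B$'$) from \cite[Theorem~3.4]{BKQ}, where the primed conditions say that $N\psi_z$ (respectively $M\theta_z$) is an isomorphism for the relevant $z$. Since (A$'$) and (B$'$) quantify over all $z\in\obj\CC$ while (I$'$) and (F$'$) only over $z$ in $\NN$ or $\MM$, the implications (A$'$) $\Rightarrow$ (I$'$) and (B$'$) $\Rightarrow$ (F$'$) are immediate specializations. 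Your approach is more self-contained---it avoids Theorem~4.3 entirely and uses only the standard fact about (co)reflections of objects already in the subcategory---while the paper's approach is terser because the translation to the primed conditions has already been done elsewhere.
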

\begin{proof} 
By \cite[Theorem 4.3]{BKQ}, (I) is equivalent to
\begin{enumerate} 
\item[(I$'$)] 
for each $y \in \obj \NN,\,  N\psi_y:NMy
\to Ny$  is an isomorphism, 
\end{enumerate} 
while (F) is equivalent to
\begin{enumerate} 
\item[(F$'$)] 
for each $x \in \obj \MM,\, 
M\theta_x:Mx \to MNx$  is an isomorphism. 
\end{enumerate} 
On the other
hand, by \cite[Theorem 3.4]{BKQ}, (A) is equivalent to 
\begin{enumerate}
\item[(A$'$)] 
for  each $z \in \obj \CC$,  $N\psi_z$ is an isomorphism,
\end{enumerate} 
while (B) is equivalent to 
\begin{enumerate}
\item[(B$'$)] 
for  each $z \in \obj \CC$,  $M\theta_z$ is an
isomorphism. 
\end{enumerate} 
Now clearly, (A$'$) implies (I$'$) and
(B$'$) implies (F$'$), so (A) implies~(I) and~(B) implies~(F).
\end{proof}

We now recall from \cite{CHK, KL} that a morphism~$f$ in~$\CC(x,y)$ and
an object~$z$ of~$\CC$ are said to be \emph{orthogonal} when the map
$\Phi_{f,z}$ from~$\CC(y, z)$ into $\CC(x,z)$ given by $\Phi_{f,z}(g) =
g\circ f$ is a bijection. The collection of  all morphisms in $\CC$
that are orthogonal to every object of $\NN$ is denoted by $\NN^\perp$.

As shown in \cite[Proposition 2.1]{KL}, a morphism $f: x\to y$ in $\CC$
belongs to $\NN^\perp$ if and only if $f$ is inverted by $N$, that is,
$Nf$ is an isomorphism. (The standing assumption in \cite{KL} that $\NN$
is replete is not necessary for this fact to be true. To see this, note
that $Nf$ is an isomorphism if and only if the map $\Psi_{f,z}$  from
$\NN(Ny, z)$ into $\NN(Nx, z)$ given by  $\Psi_{f,z}(h) = h \circ Nf$ is
a  bijection for each object $z$ of $\NN$.  For each such $z$, the
universal properties of $\theta$ imply that the map $\tau_{w,z}$  from
$\NN(Nw, z)$ into $\CC(w,z)$ given by $\tau_{w,z}(g) = g \circ
\theta_{w}$ is a bijection for each object $w$ of $\CC$. Now, as $
\theta_y \circ f =   Nf  \circ \theta_x$, the diagram 
\[ 
\xymatrix{
\NN(Ny, z) \ar[r]^-{\Psi_{f,z}} \ar[d]_{\tau_{y,z}} 
&\NN(Nx, z) \ar[d]^{\tau_{x,z}} 
\\ 
\CC(y,z) \ar[r]_-{\Phi_{f,z}} 
&\CC(x,z) } 
\] 
is
readily seen to commute. It follows that $\Psi_{f,z}$ is a bijection if
and only if $\Phi_{f,z}$ is a bijection. This shows that $Nf$ is an
isomorphism if and only if $f$ is orthogonal to $z$ for each object $z$
of $\NN$, \emph{i.e.}, if and only if $f$ belongs to $\NN^\perp$.)

Similarly, a morphism~$f$ in $\CC(x,y)$  and an object~$z$ in~$\CC$ are
\emph{co-orthogonal}  when the map $g \to f \circ g$ from $\CC(z, x)$
into $\CC(z,y)$ is a bijection. The collection of  all morphisms in
$\CC$ that are co-orthogonal to every object in $\MM$ is denoted by
$\MM^\top $. Equivalently,  a morphism $f: x\to y$ in $\CC$ belongs to
$\MM^\top $ if and only if $f$ is inverted by $M$, that is, if and only
if $Mf$ is an isomorphism.

The pair $(\NN, \MM)$ is called an \emph{associated pair} if $\NN^\perp
= \MM^\top $; equivalently, if for every morphism $f$ in $\CC$, $N$
inverts $f$ if and only if $M$ does.  We refer to \cite[Section 2]{KL}
for more information concerning this concept (in the case where both
$\MM$ and $\NN$ are also assumed to be replete).

\begin{thm}\label{assmaxnor} 
The adjunction $N  I \dashv M  J $ is a
``maximal-normal" adjoint equivalence if and only if $(\NN,\MM)$ is an
associated pair. 
\end{thm}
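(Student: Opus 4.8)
The plan is to invoke Proposition~\ref{max-nor} to trade the ``maximal-normal'' hypothesis for the conjunction of (A$'$) and (B$'$), and then to check that these two conditions together are equivalent to the equality $\NN^\perp=\MM^\top$. Recall from the discussion above that $\NN^\perp$ is the class of morphisms of $\CC$ inverted by $N$ and $\MM^\top$ the class of those inverted by $M$; thus $(\NN,\MM)$ is an associated pair exactly when, for every morphism $f$ of $\CC$, $Nf$ is an isomorphism if and only if $Mf$ is.

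The one preliminary observation I would isolate is that, with no maximal-normal assumption at all, $M$ always inverts each counit component $\psi_z$ and $N$ always inverts each unit component $\theta_z$. This is the familiar fact that a reflector inverts its unit and a coreflector inverts its counit: since the inclusions $I$ and $J$ are full and faithful, the unit of $I\dashv M$ and the counit of $N\dashv J$ are natural isomorphisms, and the relevant triangle identities then show that $M\psi_z$ and $N\theta_z$ are isomorphisms for every $z\in\obj\CC$. Equivalently, $\psi_z\in\MM^\top$ and $\theta_z\in\NN^\perp$ for all such $z$.

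For the ``if'' direction, suppose $(\NN,\MM)$ is an associated pair, so $\NN^\perp=\MM^\top$. Then $\psi_z\in\MM^\top=\NN^\perp$ says that $N\psi_z$ is an isomorphism for every $z$, which is precisely (A$'$); dually, $\theta_z\in\NN^\perp=\MM^\top$ gives (B$'$). Since (A$'$) is equivalent to (A) and (B$'$) to (B) (as recorded in the proof of Proposition~\ref{max-nor}), Proposition~\ref{max-nor} shows that $N  I\dashv M  J$ is a ``maximal-normal'' adjoint equivalence.

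For the ``only if'' direction, assume $N  I\dashv M  J$ is a ``maximal-normal'' adjoint equivalence, so (A$'$) and (B$'$) hold, and let $f\colon x\to y$ be an arbitrary morphism of $\CC$. Naturality of $\psi$ gives $f\circ\psi_x=\psi_y\circ Mf$; applying $N$ and cancelling the isomorphisms $N\psi_x$ and $N\psi_y$ supplied by (A$'$), one finds that $Nf$ is an isomorphism if and only if $NMf$ is, so in particular $Mf$ being an isomorphism forces $Nf$ to be one. Symmetrically, naturality of $\theta$ gives $\theta_y\circ f=Nf\circ\theta_x$; applying $M$ and cancelling the isomorphisms $M\theta_x$ and $M\theta_y$ from (B$'$), one finds that $Mf$ is an isomorphism if and only if $MNf$ is, so $Nf$ being an isomorphism forces $Mf$ to be one. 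Hence $N$ inverts $f$ if and only if $M$ does, for every $f$, which is exactly the statement that $(\NN,\MM)$ is an associated pair. I do not anticipate a real obstacle here: the whole argument reduces to two naturality-square chases resting on the unconditional fact that $M$ inverts $\psi$ and $N$ inverts $\theta$, and the only place to be careful is the (harmless) bookkeeping of the inclusions $I$ and $J$ when composing $\psi$ and $\theta$ with $M$ and $N$.
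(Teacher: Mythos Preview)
Your argument is correct. For the ``if'' direction you and the paper do essentially the same thing: both observe that $\theta_z\in\NN^\perp$ and $\psi_z\in\MM^\top$ unconditionally, and then use the hypothesis $\NN^\perp=\MM^\top$ to obtain (A$'$) and (B$'$). The paper phrases the first observation via the orthogonality bijection $\Phi_{\theta_x,z}=\tau_{x,z}$, you via the triangle identities; these are the same content.

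The ``only if'' direction is where you take a genuinely different route. The paper quotes \cite[Proposition~5.3]{BKQ} for a natural isomorphism $N\cong NIM$ and then uses that $NI$, being an equivalence, \emph{reflects} isomorphisms, yielding the two-line chain $Nf$ iso $\Leftrightarrow NIMf$ iso $\Leftrightarrow Mf$ iso. You instead run the two naturality squares for $\psi$ and $\theta$ under (A$'$) and (B$'$), using only that functors \emph{preserve} isomorphisms. Your approach is more elementary and entirely self-contained (no appeal to the cited external result, and no need to invoke that the adjoint equivalence reflects isomorphisms); the paper's version is terser once one is willing to cite $N\cong NIM$, and has the minor conceptual advantage of making visible that only half of the maximal-normal hypothesis---essentially (A$'$) together with the equivalence---is needed, whereas your symmetric argument spends both (A$'$) and (B$'$).
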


\begin{proof} 
First assume that $(\NN,\MM)$ is an associated pair, and
let $x$ be an object in $\CC$.  As pointed out above, the map
$\tau_{x,z}$ is a bijection from $\NN(Nx,z)$ into $\CC(x,z)$ for each
object $z$ of $\NN$. But $\Phi_{\theta_x,z} = \tau_{x,z}$, so this means
that $\theta_x$ lies in $\NN^\perp$, and therefore in $\MM^\top $. As
$\MM^\top $ consists of the morphisms in $\CC$ that are inverted by $M$,
we deduce that $M\theta_x$ is an isomorphism.  This shows that (B$'$)
holds, and therefore that~(B) holds.  The argument that (A) holds is
similar, so $N  I \dashv M  J $ is a ``maximal-normal" adjoint
equivalence by Proposition \ref{max-nor}.

Now assume that the adjunction $N  I \dashv M  J $ is a
``maximal-normal" adjoint equivalence.  Then $N\cong NIM$ by
\cite[Proposition~5.3]{BKQ}, and $NI$ is an equivalence.  So for any
morphism~$f$ of~$\CC$, we have 
\begin{align*} 
\text{$Nf$ is an
isomorphism} 
&\Leftrightarrow\text{$NIMf$ is an isomorphism}\\
&\Leftrightarrow\text{$Mf$ is an isomorphism.} 
\end{align*} 
Thus
$(\NN,\MM)$ is an associated pair. 
\end{proof}

\begin{rem} 
In the examples presented in \cite[Section 6]{BKQ}, the
adjunctions $N  I \dashv M  J $ are ``maximal-normal" adjoint
equivalences, so all the pairs $(\NN, \MM)$ there are associated pairs.
Moreover, all these pairs consist of subcategories that are easily seen
to be replete. It follows from  \cite[Theorem 2.4]{KL} that $\MM$ and
$\NN$ are uniquely determined 
as subcategories
by each other, a fact that is not \emph{a
priori} obvious in any of the examples. 
\end{rem}


\begin{thebibliography}{10}

\bibitem{BKQ} E.~B{\'e}dos, S.~Kaliszewski and J.~Quigg,
\emph{Reflective-coreflective equivalence},  Th. Appl. Cat. \text{25}
(2011), 142--179.

\bibitem{CHK} C. Cassidy, M. H{\'e}bert and G.M. Kelly, \emph{Reflective
subcategories, localizations and factorization systems}, J. Austral.
Math. Soc.  \text{38} (S\'eries~A) (1985), 287--329.

\bibitem{KL} F.W. Lawvere and G.M. Kelly, \emph{On the Complete Lattice
of Essential Localizations}, Bull. Soc. Math. Belg. S\'er~A \text{41}
(1989), 289--319.

\bibitem{maclane} S.~{Mac Lane}, \emph{Categories for the working
mathematician}, second ed., Graduate Texts in Mathematics, vol.~5,
Springer-Verlag, New York, 1998.

\end{thebibliography}
\end{document}